\long\def\symbolfootnote[#1]#2{\begingroup%
\def\thefootnote{\fnsymbol{footnote}}\footnote[#1]{#2}\endgroup} 
\newtheorem{theorem}{Theorem}[section]
\newtheorem{lemma}[theorem]{Lemma}
\newtheorem{proposition}[theorem]{Proposition}
\newcommand{\Z}{\mathbb{Z}}
\newcommand{\floor}[1]{{\left\lfloor#1\right\rfloor}}
\newcommand{\ceil}[1]{{\left\lceil#1\right\rceil}}
\newcommand{\oA}{\overline{A}}
\newcommand{\oB}{\overline{B}}
\begin{document}
\title{Additive triples in groups of odd prime order}
\author{Sophie Huczynska \and Jonathan Jedwab \and Laura Johnson}
\date{7 May 2024}
\maketitle

\symbolfootnote[0]{
S.~Huczynska and L.~Johnson are with School of Mathematics and Statistics,
University of St Andrews, Mathematical Institute, North Haugh, St Andrews KY16 9SS, Scotland.
Email: {\tt sh70@st-andrews.ac.uk, lj68@st-andrews.ac.uk}

J.~Jedwab is with Department of Mathematics, 
Simon Fraser University, 8888 University Drive, Burnaby BC V5A 1S6, Canada.
Email: {\tt jed@sfu.ca}

S.~Huczynska was funded by EPSRC grant EP/X021157/1.
J.~Jedwab is supported by NSERC.
}

\begin{abstract}
Let $p$ be an odd prime.
For nontrivial proper subsets $A,B$ of~$\mathbb{Z}_p$ of cardinality $s,t$, respectively, we count the number $r(A,B,B)$ of \emph{additive triples}, namely elements of the form $(a, b, a+b)$ in $A \times B \times B$.
For given $s,t$, what is the spectrum of possible values for $r(A,B,B)$?
In the special case $A=B$, the additive triple is called a \emph{Schur triple}. Various authors have given bounds on the number $r(A,A,A)$ of Schur triples, and shown that the lower and upper bound can each be attained by a set $A$ that is an interval of $s$ consecutive elements of $\mathbb{Z}_p$. However, there are values of $p,s$ for which not every value between the lower and upper bounds is attainable. 
We consider here the general case where $A,B$ can be distinct.
We use Pollard's generalization of the Cauchy-Davenport Theorem to derive bounds on the number $r(A,B,B)$ of additive triples. In contrast to the case $A=B$, we show that every value of $r(A,B,B)$ from the lower bound to the upper bound is attainable: each such value can be attained when $B$ is an interval of $t$ consecutive elements of~$\mathbb{Z}_p$.
\end{abstract}

\section{Introduction}
Let $G$ be an additive group.  
A \emph{Schur triple} in a subset $A$ of $G$ is a triple of the form $(a,b,a+b) \in A^3$; Schur triples were originally considered only in the case $G=\Z$~\cite{Sch}. Let $r(A)$ be the number of Schur triples in~$A$.
Several authors have studied the behaviour of $r(A)$ as $A$ ranges over some or all subsets of a group~$G$, and the nature of the subsets $A$ attaining a particular value of~$r(A)$. 

A \emph{sum-free set} $A$ is one for which $r(A)=0$, and has received much attention.
The Cameron-Erd\H{o}s Conjecture \cite{CamErd} concerns the number of sum-free sets in $\{1,2,\dots,n\} \subset \Z$; this was resolved independently by Green~\cite{Gre} and Sapozhenko~\cite{Sap}.
Lev and Schoen~\cite{LevSch} studied the number of sum-free sets when $G$ is a group of prime order.
Erd\H{o}s \cite{Erd} asked what is the largest size of a sum-free set in an abelian group; this question was considered by Green and Ruzsa~\cite{GreRuz}. 

A popular problem is to determine the minimum and maximum value of $r(A)$ over all subsets $A$ of fixed cardinality in a specified group~$G$. The case $G = \Z_p$ for a prime~$p$ is of particular interest, in part because of its relation to sumset results such as the Cauchy-Davenport Theorem~\cite{Cau,Dav}.
We use the set notation $a+B := \{a+b: b \in B\}$ and $A+B := \{a+B : a \in A\}$.

\begin{theorem}[Cauchy-Davenport Theorem \cite{Cau,Dav}]
\label{thm:c-d}
Let $p$ be prime and let $A,B$ be non-empty subsets of~$\Z_p$.
Then $|A + B| \ge \min(p, |A| + |B|-1)$.
\end{theorem}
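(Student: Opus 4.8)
The plan is to prove the two cases of the minimum separately. When $|A| + |B| - 1 \ge p$, I claim $A + B = \mathbb{Z}_p$, so that $|A+B| = p = \min(p,|A|+|B|-1)$. Indeed, fix any $g \in \mathbb{Z}_p$ and consider the sets $A$ and $g - B := \{g - b : b \in B\}$; their cardinalities $|A|$ and $|B|$ sum to at least $p+1 > |\mathbb{Z}_p|$, so the two sets must intersect. Any common element $a = g - b$ yields $g = a + b \in A + B$, proving $A + B = \mathbb{Z}_p$. This case uses only the pigeonhole principle and does not invoke primality.

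The substantive case is $|A| + |B| - 1 \le p$, where I must show $|A + B| \ge |A| + |B| - 1$, and I would argue by contradiction via the polynomial method (the Combinatorial Nullstellensatz). Suppose $|A + B| \le |A| + |B| - 2$. Since $|A| + |B| - 2 \le p - 1 < |\mathbb{Z}_p|$, I can enlarge $A+B$ to a set $C \subseteq \mathbb{Z}_p$ with $A + B \subseteq C$ and $|C| = |A| + |B| - 2$ exactly. Define
\[
f(x, y) = \prod_{c \in C}(x + y - c) \in \mathbb{Z}_p[x, y].
\]
For every $(a, b) \in A \times B$ we have $a + b \in A + B \subseteq C$, so $f(a, b) = 0$; thus $f$ vanishes identically on $A \times B$.

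The crux is to contradict this by exhibiting a point of $A \times B$ at which $f$ does not vanish. The total degree of $f$ is $|C| = |A| + |B| - 2$, and the coefficient of the monomial $x^{|A|-1} y^{|B|-1}$, whose total degree $|A|+|B|-2$ is maximal, is inherited entirely from the top-degree part $(x+y)^{|A|+|B|-2}$ and hence equals $\binom{|A|+|B|-2}{|A|-1}$. Here primality is essential: writing $n = |A|+|B|-2 \le p-1$, no factor of the numerator $n!$ is divisible by $p$, so $\binom{n}{|A|-1}$ is nonzero in $\mathbb{Z}_p$. Because $|A| > |A| - 1$ and $|B| > |B| - 1$, the Combinatorial Nullstellensatz then guarantees some $(a, b) \in A \times B$ with $f(a, b) \ne 0$, contradicting the previous paragraph and completing the proof.

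The main obstacle, and the only place the hypothesis that $p$ is prime is genuinely used, is the verification that this binomial coefficient does not vanish modulo $p$; the remainder is bookkeeping about degrees together with the clean padding of $A + B$ up to cardinality exactly $|A| + |B| - 2$, which is precisely where the assumption $|A| + |B| - 1 \le p$ guarantees enough room in $\mathbb{Z}_p$. As an alternative to the polynomial approach, I could induct on $|B|$ using the Davenport $e$-transform $(A, B) \mapsto \bigl(A \cup (B + e),\, B \cap (A - e)\bigr)$, which preserves $|A| + |B|$ and does not increase $|A + B|$ while reducing to a smaller instance; but that route requires a more delicate case analysis, so I would prefer the self-contained polynomial argument above.
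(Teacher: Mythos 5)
Your proof is correct, but note that the paper itself offers no proof of this statement: Theorem~\ref{thm:c-d} is quoted as a classical result with citations to Cauchy and Davenport, so there is no internal argument to compare against. What you have written is the standard polynomial-method proof via the Combinatorial Nullstellensatz (due to Alon, Nathanson and Ruzsa), and every step checks out: the pigeonhole argument disposes of the case $|A|+|B|-1\ge p$; the padding of $A+B$ up to a set $C$ of size exactly $|A|+|B|-2$ is legitimate because $|A|+|B|-2\le p-1<p$; the coefficient of $x^{|A|-1}y^{|B|-1}$ in $\prod_{c\in C}(x+y-c)$ is indeed $\binom{|A|+|B|-2}{|A|-1}$, since only the top-degree part $(x+y)^{|C|}$ contributes to monomials of maximal total degree; and this binomial coefficient is a unit in $\Z_p$ because $|A|+|B|-2\le p-1$ keeps $p$ out of the factorials. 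This route differs from the original proofs of Cauchy and Davenport (and from the $e$-transform induction you sketch as an alternative), trading a delicate combinatorial case analysis for a single clean application of a powerful black box; the cost is that the Nullstellensatz itself must be imported, so the argument is self-contained only modulo that lemma. Either route is a valid proof of the theorem as stated.
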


The special case $A=B$ of Theorem~\ref{thm:c-d} counts the number of distinct values that the sum $a+b$ can take as $a,b$ range over $A$, without taking account of how many times the sum is attained nor whether it lies in the subset~$A$.

The following generalization of the Cauchy-Davenport Theorem provides more infomation which is relevant to counting occurrences of each sum. The special case $j=1$ reduces to the Cauchy-Davenport Theorem.
\begin{theorem}[Pollard \cite{Pol}]
\label{thm:Pollard}
Let $p$ be prime and let $A, B$ be subsets of $\mathbb{Z}_p$ of cardinality $s,t$, respectively.
For $i \ge 1$, let $S_i$ be the set of elements of $\mathbb{Z}_p$ expressible in at least $i$ ways in the form $a + b$ for $a \in A$ and $b \in B$.
Then
\[
\sum_{i=1}^j |S_i| \ge j \, \min (p,\, s+t-j) \quad \mbox{for $1 \le j \le \min(s,t)$}.
\]
\end{theorem}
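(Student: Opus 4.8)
My plan is to pass to the representation function and a layer-cake identity, dispose of an easy ``saturated'' case, and then attack the main range by induction. For $c \in \Z_p$ write $r(c) = |\{(a,b) \in A \times B : a+b=c\}| = |A \cap (c-B)|$ for the number of representations, so that $S_i = \{c : r(c) \ge i\}$. Counting the pairs $(c,i)$ with $1 \le i \le j$ and $r(c) \ge i$ in two ways gives $\sum_{i=1}^j |S_i| = \sum_{c \in \Z_p} \min(r(c), j)$, which is the quantity I shall bound. If $s+t-j \ge p$, then for every $c$ the inclusion--exclusion bound $r(c) = |A \cap (c-B)| \ge |A| + |c-B| - p = s+t-p \ge j$ shows that each term equals $j$, so the sum is exactly $jp = j\min(p,s+t-j)$; this settles the case $\min(p,s+t-j)=p$. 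It remains to prove $\sum_c \min(r(c),j) \ge j(s+t-j)$ when $s+t-j < p$.

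For the main range I would induct on $s+t$ with $j$ fixed and $j \le \min(s,t)$. In the base case $s=t=j$ every term $\min(r(c),j)$ contributes all of $r(c)$, so the sum is $\sum_c r(c) = st = j^2 = j(s+t-j)$, with equality. For the inductive step, assume $s+t>2j$; by symmetry of the two sets we may suppose $t>j$, delete an element $b_0$ from $B$, and set $B_0 = B \setminus \{b_0\}$ with representation function $r_0$. Since the only representation of $c$ using $b_0$ is $c=(c-b_0)+b_0$, we have $r(c) = r_0(c) + \mathbf{1}[c \in A+b_0]$, whence $\min(r(c),j) - \min(r_0(c),j) = \mathbf{1}[\,c \in A+b_0 \text{ and } r_0(c) < j\,]$. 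Summing over $c$ and applying the inductive hypothesis to $(A,B_0)$ (still in the range $s+(t-1)-j<p$, with $t-1 \ge j$) gives $\sum_c \min(r(c),j) \ge j(s+t-1-j) + |(A+b_0)\setminus S_j^{(0)}|$, where $S_j^{(0)} = \{c : r_0(c) \ge j\}$. It therefore suffices to delete $b_0$ so that $|(A+b_0)\setminus S_j^{(0)}| \ge j$. A short calculation shows that for $c \in A+b_0$ one has $r_0(c) \ge j \iff r(c) \ge j+1$, so $S_j^{(0)}$ agrees with the original level set $S_{j+1}$ on $A+b_0$, and the required inequality becomes $|(A+b_0) \cap S_{j+1}| \le s-j$.

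The crux, and what I expect to be the main obstacle, is producing an element $b_0 \in B$ whose translate $A+b_0$ meets the heavy level set $S_{j+1}$ in at most $s-j$ points. A pure averaging argument provably cannot deliver this: averaging $|(A+b_0)\cap S_{j+1}|$ over $b_0 \in B$ yields $\tfrac1t \sum_{c \in S_{j+1}} r(c)$, which for an interval $B$ with $t \gg j$ already exceeds $s-j$, even though an endpoint $b_0$ of the interval attains $|(A+b_0)\cap S_{j+1}| = s-j$ exactly. Hence the good choice of $b_0$ must exploit the structure of $S_{j+1}$ and not merely its size; I would try to locate a ``boundary'' element of $B$ relative to $S_{j+1}$, steering the configuration toward the extremal interval case where the whole chain of inequalities is tight (consistent with $st - j(s+t-j) = (s-j)(t-j)$), most plausibly by a Cauchy--Davenport or compression-type argument applied to $A$ and $S_{j+1}$.
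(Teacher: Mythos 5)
The paper gives no proof of this theorem: it is quoted verbatim from Pollard's paper \cite{Pol} and used as a black box, so your proposal can only be judged on its own merits. On those merits it contains a genuine gap, which to your credit you identify yourself. The preparatory work is all correct: the layer-cake identity $\sum_{i=1}^j |S_i| = \sum_{c} \min(r(c),j)$, the disposal of the saturated case $s+t-j \ge p$ via $r(c) \ge s+t-p \ge j$, the base case $s=t=j$, the deletion identity $\min(r(c),j)-\min(r_0(c),j) = \mathbf{1}[\,c \in A+b_0,\ r_0(c)<j\,]$, and the observation that on $A+b_0$ the set $S_j^{(0)}$ coincides with $S_{j+1}$. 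But the entire weight of the theorem is then placed on the unproved assertion that some $b_0 \in B$ satisfies $|(A+b_0) \cap S_{j+1}| \le s-j$. This is not a routine missing detail: as you correctly compute, averaging over $b_0 \in B$ only yields $\frac{1}{t}\sum_{c \in S_{j+1}} r(c)$, which can exceed $s-j$, and no argument is offered that a good $b_0$ exists for an arbitrary pair $(A,B)$ --- let alone for every pair arising along the induction, where a single failure collapses the whole chain. The closing paragraph is a list of things you ``would try,'' not a proof, and the needed existence statement is essentially as deep as Pollard's theorem itself.

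For comparison, Pollard's actual argument (and the standard modern treatments) do not delete single elements of $B$; they apply the Dyson/Davenport $e$-transform, replacing $(A,B)$ by $\big(A \cup (B+e),\ B \cap (A-e)\big)$, which preserves $|A|+|B|$, does not increase $\sum_c \min(r(c),j)$, and strictly shrinks $B$ unless the configuration is already rigid enough to be finished off by the Cauchy--Davenport Theorem. That transform is precisely the structural input your sketch is missing: it supplies, in effect, the ``boundary'' element you hope to find, but in a form for which the monotonicity can actually be verified. As written, your proposal reduces the theorem to an open claim rather than proving it.
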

\noindent
Theorem~\ref{thm:Pollard} was a crucial tool in the proof of \cite[Theorem~3.6]{HucMulYuc}, which used linear programming to determine the minimum and maximum value of $r(A)$ when $A$ is a subset of fixed cardinality in~$\Z_p$. The following theorem summarizes results from~\cite{HucMulYuc}.

\begin{theorem}[Huczynska, Mullen, Yucas \cite{HucMulYuc}]
\label{thm:hmy}
Let $p$ be an odd prime and let $1 \le s \le p-1$. Let
\begin{align*}
f_s 
%&= \begin{cases}
%     0 & \mbox{for $s \le \floor{\frac{p+1}{3}}$}, \\
%     \frac{(3s-p)^2-1}{4} & \mbox{for $\floor{\frac{p+1}{3}} < s$ and $s$ even}, \\
%     \frac{(3s-p)^2}{4} & \mbox{for $\floor{\frac{p+1}{3}} < s$ and $s$ odd} \\
%  \end{cases} \\
%&= \begin{cases}
%     0 & \mbox{for $s \le \floor{\frac{p+1}{3}}$}, \\
%     \floor{\frac{(3s-p)^2}{4}} & \mbox{for $\floor{\frac{p+1}{3}} < s$} \\
%    \end{cases} \\
 &= \begin{cases}
      0 & \mbox{for $s \le \frac{p+1}{3}$}, \\
      \floor{\frac{(3s-p)^2}{4}} & \mbox{for $\frac{p+2}{3} \le s$}, \\
    \end{cases} \\
 g_s 
%&= \begin{cases}
%      \frac{3s^2}{4} & \mbox{for $s \le p-\floor{\frac{p+1}{3}}$ and $s$ even},\\
%      \frac{3s^2+1}{4} & \mbox{for $s \le p-\floor{\frac{p+1}{3}}$ and $s$ odd},\\
%      p^2-3sp+3p^2 & \mbox{for $p-\floor{\frac{p+1}{3}} < s$}
%   \end{cases} \\
%&= \begin{cases}
%      \ceil{\frac{3s^2}{4}} & \mbox{for $s \le p-\floor{\frac{p+1}{3}}$},\\
%      s(2s-p)+(p-s)^2 & \mbox{for $p-\floor{\frac{p+1}{3}} < s$}
%   \end{cases} \\
%&= \begin{cases}
%      \ceil{\frac{3s^2}{4}} & \mbox{for $s \le \ceil{\frac{2p-1}{3}}$},\\
%      s(2s-p)+(p-s)^2 & \mbox{for $\ceil{\frac{2p+2}{3}} \le s$}
%   \end{cases} \\
 &= \begin{cases}
       \ceil{\frac{3s^2}{4}} & \mbox{for $s \le \frac{2p+1}{3}$},\\
        s(2s-p)+(p-s)^2 & \mbox{for $\frac{2p+2}{3} \le s$}.
    \end{cases} \\
\end{align*}
Then
\begin{enumerate}[$(i)$]
\item
As $A$ ranges over all subsets of $\Z_p$ of cardinality $s$, we have
\[
f_s \le r(A) \le g_s.
\]

\item
The values $f_s$ and $g_s$ for $r(A)$ can each be attained by a set $A$ that is an interval of $s$ consecutive elements of~$\Z_p$.

\item
For certain $p$ and $s$, there is at least one value in the interval $(f_s,\,g_s)$ which is not attainable as $r(A)$ for a subset $A$ of $\Z_p$ of cardinality~$s$.
\end{enumerate}
\end{theorem}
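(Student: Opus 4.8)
The plan is to treat all three parts through the multiplicity function $\nu_A(c) := |\{(a,b) \in A \times A : a + b = c\}|$, so that $r(A) = \sum_{c \in A} \nu_A(c)$ while $\sum_{c \in \Z_p} \nu_A(c) = s^2$ and $0 \le \nu_A(c) \le s$. Writing the sets $S_i$ of Theorem~\ref{thm:Pollard} (with $B = A$, $t = s$) in the form $\sum_{i=1}^j |S_i| = \sum_c \min(\nu_A(c), j)$, Pollard's theorem becomes a family of lower bounds $\sum_c \min(\nu_A(c), j) \ge j\min(p, 2s - j)$ for $1 \le j \le s$ on the sorted values $\nu_1 \ge \nu_2 \ge \cdots$ of $\nu_A$. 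These, together with the mass constraint $\sum_k \nu_k = s^2$ and the height constraint $\nu_k \le s$, are exactly the constraints of a linear program.

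For part $(i)$ I would relax $r(A)$ to a quantity depending only on the sorted profile. Since $|A| = s$, on one hand $r(A) = \sum_{c \in A}\nu_A(c) \le \sum_{k=1}^s \nu_k$ (the sum of the $s$ largest multiplicities), and on the other $r(A) = s^2 - \sum_{c \notin A}\nu_A(c) \ge s^2 - \sum_{k=1}^{p-s}\nu_k$. It then suffices to maximize the top-$s$ sum, respectively the top-$(p-s)$ sum, over all profiles satisfying the Pollard constraints. Both are linear programs whose optima I would evaluate by exhibiting the extremal profile together with a dual certificate; in each case the extremal profile is the triangular one $\nu = (s, s-1, s-1, s-2, s-2, \dots)$ arising from an interval, for which Pollard holds with equality for every $j$. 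The split in the formulas for $f_s$ and $g_s$ at $s \approx \tfrac{p}{3}$ and $s \approx \tfrac{2p}{3}$ corresponds precisely to the thresholds where the sumset support $2s-1$ first exceeds $p-s$ (forcing unavoidable overlap, so that $f_s$ becomes positive) and where the Pollard bound $\min(p, 2s-j)$ switches to the cap~$p$.

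Part $(ii)$ I would then obtain as a by-product of the tightness analysis. The symmetric interval $A = \{-\tfrac{s-1}{2}, \dots, \tfrac{s-1}{2}\}$ realizes the top-$s$ triangular profile and so attains $g_s$; a middle-third interval is sum-free and attains $f_s = 0$ for small $s$; and for large $s$ an interval placed opposite the centre of its own sumset attains the minimizing tail sum $\lfloor (3s-p)^2/4 \rfloor$. In each case one evaluates $\sum_{c \in A}\nu_A(c)$ directly, accounting for wraparound, and checks that it meets the LP optimum, which closes the relaxation gap from part $(i)$.

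The delicate part is $(iii)$, which asserts non-attainability and so cannot follow from any single construction. Here I would first establish a stability statement: any $A$ whose $r(A)$ lies within a bounded distance of $g_s$ must have a multiplicity profile close to the extremal triangular one, whence a Vosper/Freiman-type rigidity forces $A$ to be close to the symmetric interval. I would then enumerate the values of $r(A)$ produced by all bounded perturbations of that interval, show that near the top they form a set of integers whose consecutive gaps exceed $1$ for a suitable congruence class of $(p,s)$, and conclude that the intervening integer is skipped. The main obstacle is exactly this rigidity step: one must rule out \emph{every} set — including non-interval, arithmetically irregular ones — from attaining the skipped value, which requires pushing the stability theory of Pollard's inequality well beyond the mere evaluation of $r$ on intervals.
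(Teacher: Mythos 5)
You should first note that the paper contains no proof of Theorem~\ref{thm:hmy}: it is quoted verbatim from \cite{HucMulYuc} as background, so there is no internal argument to match. Measured against the strategy the paper attributes to that reference, your plan for parts $(i)$ and $(ii)$ is essentially the right one: recasting Pollard's inequality (Theorem~\ref{thm:Pollard} with $B=A$) as the constraints $\sum_c \min(\nu_A(c),j)\ge j\min(p,2s-j)$ on the sorted multiplicity profile, relaxing $r(A)$ to the top-$s$ partial sum from above and to $s^2$ minus the top-$(p-s)$ partial sum from below, and certifying the triangular profile of an interval as the optimum of the resulting linear programs is exactly the ``linear programming'' route of \cite[Theorem~3.6]{HucMulYuc}, and the interval placements you describe (symmetric interval for $g_s$, a sum-free middle-third interval for $f_s=0$, an interval offset against its own sumset for $f_s=\floor{(3s-p)^2/4}$) are the standard extremal examples. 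These parts are sketches rather than proofs --- the dual certificates and the wrap-around regime $2s>p$, where the cap $\min(p,2s-j)=p$ bites and the interval's profile overlaps itself, are asserted rather than worked out --- but the plan is sound and completable.

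The genuine gap is part $(iii)$. The stability-and-rigidity programme you propose (a Vosper/Freiman-type statement forcing any near-extremal $A$ to be near an interval, followed by a gap analysis of $r$ over perturbations) is not carried out, and you yourself flag the rigidity step as an unresolved obstacle; as written, $(iii)$ is not proved. More importantly, that machinery is unnecessary: $(iii)$ asserts only that \emph{some} pair $(p,s)$ exhibits a missing value, so a single explicit finite verification suffices. For instance, take $p=5$ and $s=2$, where $f_2=0$ and $g_2=\ceil{12/4}=3$. Any $A=\{0,x\}$ with $x\ne 0$ gives exactly the three triples $(0,0,0),(0,x,x),(x,0,x)$, since $2x\notin\{0,x\}$; and any $A=\{x,y\}$ with $x,y\ne 0$ gives $r(A)=[2x=y]+[2y=x]\le 1$, because $x+y\notin A$ and $2x=y$, $2y=x$ cannot hold simultaneously in $\Z_5$. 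Hence the spectrum is $\{0,1,3\}$ and the value $2\in(f_2,g_2)$ is unattainable. Replacing your rigidity step with such a direct check (this is how the phenomenon was located in \cite{HucMulYuc}, by computer search) closes the gap; pursuing the stability route instead would leave $(iii)$ resting on a theorem you have not established.
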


The actual spectrum of possible values of $r(A)$ in the setting of Theorem~\ref{thm:hmy} was conjectured but not resolved in~\cite{HucMulYuc}.
For $p > 11$, not all attainable values of $r(A)$ (found by computer search) were explained by constructions in~\cite{HucMulYuc}.

Samotij and Sudakov \cite{SamSud} obtained similar results to Theorem~\ref{thm:hmy} for various abelian groups, including elementary abelian groups and $\mathbb{Z}_p$, using a different proof to that of~\cite{HucMulYuc}. They also showed that a subset of the group $\mathbb{Z}_p$ achieving the minimum value $f_s$ (when this is nonzero) must be an arithmetic progression.  
Bajnok \cite{Baj} proposed to generalize from counting Schur triples to counting $(s+1)$-tuples, and suggested the case $G=\Z_p$ as a first step. This case was addressed by Chervak, Pikhurko and Staden \cite{ChePikSta}, who showed that extremal configurations exist with all sets consisting of intervals.

In this paper we consider a different generalization of Schur triples. Let $A,B$ be subsets of a group $G$ of cardinality $s,t$, respectively, and let $r(A,B,B)$ be the number of \emph{additive triples} in $G$, namely elements of the form $(a,b,a+b) \in A \times B \times B$. 
(Note that $r(A,A,A)$ is identical to $r(A)$ as used above.)
For given $s,t$, what is the spectrum of possible values of $r(A,B,B)$? This generalization of Schur triples is not only natural, it is also closer to the setting of the Cauchy-Davenport Theorem than is the special case $A=B$.
We shall always take $G=\Z_p$, where $p$ is an odd prime.

Our main result is Theorem~\ref{thm:main}, which determines the smallest and largest value of $r(A,B,B)$ as a function of $s,t$, and shows that (in contrast to the special case $A=B$) every intermediate value can be attained by $r(A,B,B)$.

\begin{theorem}[Main Theorem]
\label{thm:main}
Let $p$ be an odd prime and let $1 \le s,t \le p-1$.  Let
\begin{align}
f(s,t) &=  \begin{cases}
        0 & \mbox{for $2t \le p-s+1$}, \\
        \floor{\frac{(s+2t-p)^2}{4}} & \mbox{for $p-s+2 \le 2t \le p+s-2$}, \\
        s(2t-p) &  \mbox{for $p+s-1 \le 2t$},
           \end{cases} \label{eqn:f} \\
g(s,t) &=  \begin{cases}
        t^2 & \mbox{for $2t \le s$},\\
        \ceil{\frac{s(4t-s)}{4}} & \mbox{for $s+1 \le 2t \le 2p-s-1$},\\
        s(2t-p)+(p-t)^2 & \mbox{for $2p-s \le 2t$}.
           \end{cases} \label{eqn:g}
\end{align}
The set of values taken by $r(A,B,B)$ as $A,B$ range over all subsets of $\Z_p$ of cardinality $s,t$, respectively, is the closed integer interval~$[f(s,t), \, g(s,t)]$.
\end{theorem}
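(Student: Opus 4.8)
The plan is to separate the extremal bounds (the claim $f(s,t)\le r(A,B,B)\le g(s,t)$) from the attainability of intermediate values, and to link the two through a single ``interval'' configuration. I would first rewrite the quantity to be estimated. For fixed $B$, writing $w_B(a)=|(a+B)\cap B|$ gives $r(A,B,B)=\sum_{a\in A}w_B(a)$; dually, letting $(A*B)(c)$ denote the number of representations $c=a+b$ with $a\in A,\ b\in B$, and letting $S_i$ be the sets of Theorem~\ref{thm:Pollard}, one has
\[
r(A,B,B)=\sum_{c\in B}(A*B)(c)=\sum_{i\ge 1}|S_i\cap B|.
\]
Since $S_i\setminus B$ and $S_i\cap B$ have sizes at most $p-t$ and $t$, this yields the pointwise bounds $\max(0,|S_i|-(p-t))\le |S_i\cap B|\le \min(|S_i|,t)$, and hence, using $\sum_i|S_i|=st$ for the right-hand inequality,
\[
\sum_{i\ge1}\bigl(|S_i|-(p-t)\bigr)_+ \;\le\; r(A,B,B)\;\le\; st-\sum_{i\ge1}\bigl(|S_i|-t\bigr)_+,
\]
where $(x)_+=\max(0,x)$.

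The bounds then reduce to optimizing the separable convex functionals $\sum_i\phi(|S_i|)$ over all admissible multiplicity profiles $(|S_i|)$, and this is exactly where Theorem~\ref{thm:Pollard} enters: it asserts $\sum_{i=1}^j|S_i|\ge j\min(p,s+t-j)$ for every $j$, while $\sum_i|S_i|=st$. The lower envelope of the partial sums is attained by the (possibly $p$-clipped) trapezoidal profile $|S_i|=\min(p,\,s+t+1-2i)$, which is precisely the profile produced when $A$ and $B$ are intervals, since then $A*B$ is a trapezoid. Consequently every admissible profile majorizes the trapezoidal one, so for any convex $\phi$ the trapezoid minimizes $\sum_i\phi(|S_i|)$. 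Applying this with $\phi(x)=(x-(p-t))_+$ to the left inequality gives the universal lower bound $r(A,B,B)\ge f(s,t)$, and with $\phi(x)=(x-t)_+$ to the right inequality gives $r(A,B,B)\le g(s,t)$; in each case one checks that the trapezoidal value of the functional equals the stated closed form, and that the three regimes of \eqref{eqn:f} and \eqref{eqn:g} record whether the trapezoid is clipped at $p$ and whether its profile ever crosses the threshold $p-t$ (resp.\ $t$).

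For attainability I would fix $B$ to be an interval of $t$ consecutive elements and study $r(A,B,B)=\sum_{a\in A}w_B(a)$ as a pure subset-sum problem in $A$. A direct computation shows $w_B$ is a ``tent'' function: its multiset of values on $\Z_p$ consists of $p-(2t-1)$ zeros together with the consecutive integers $1,1,2,2,\dots$ up to the peak $t=w_B(0)$ (with the obvious modification, bottoming out at $2t-p$, once $2t>p$). Choosing $A$ to collect the $s$ smallest, respectively largest, weights realizes the endpoints $f(s,t)$ and $g(s,t)$, confirming that the previous bounds are sharp and reducing the remaining task to contiguity. For contiguity I would show that as $A$ ranges over all $s$-subsets the attainable values $\sum_{a\in A}w_B(a)$ form a full integer interval: starting from a minimizing $A$, one repeatedly trades one element of $A$ for an unchosen element so as to raise the sum by exactly $1$, which is always possible until the maximum is reached because $w_B$ takes \emph{every} integer value between its minimum and its maximum, so a chosen element can always be exchanged for an unchosen one of weight exactly one larger.

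The principal obstacle is the bounds step, specifically the bookkeeping that turns the single majorization statement into the three genuinely different regimes of $f$ and $g$: one must track when Pollard's minimum switches from $s+t-j$ to $p$ (the wraparound that produces the outer cases $2t\le p-s+1$, $p+s-1\le 2t$ and $2t\le s$, $2p-s\le 2t$) and when the trapezoidal profile crosses the thresholds $p-t$ and $t$, and then evaluate the resulting truncated arithmetic progressions, where the floor and ceiling in \eqref{eqn:f}--\eqref{eqn:g} emerge from the parities of $s+2t-p$ and of $s$. A secondary care point is the contiguity argument: it is essential there that the tent weights realize \emph{consecutive} integers rather than merely a spread of values, since otherwise the unit-increasing swap need not exist and gaps could appear.
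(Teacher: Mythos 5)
Your overall architecture is sound, and your treatment of attainability (interval $B$, the ``tent'' weight function $w_B(a)=|(a+B)\cap B|$, endpoints given by the $s$ smallest/largest weights, then unit-step swaps) is essentially the paper's argument: the paper's Lemma~\ref{lem:multiset} records exactly your multiset of tent values, and its ball-moving step is your exchange argument in different clothing; your observation that the swap needs $w_B$ to take \emph{consecutive} values is exactly the point that makes it work. For the bounds, your route genuinely differs from the paper's on the upper side: the paper proves only the lower bound directly from Pollard (choosing one good index $j$ in $\sum_{i=1}^{j}(|S_i|-|\oB|)$, which is equivalent to your $\sum_i(|S_i|-(p-t))_+$ since the $|S_i|$ are non-increasing) and then obtains the upper bound for free from the complementation identity $r(A,B,B)+r(\oA,\oB,\oB)=|A||B|-|A||\oB|+|\oB|^2$. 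Your direct bound $r(A,B,B)\le st-\sum_i(|S_i|-t)_+$ combined with Karamata avoids that identity at the cost of a second extremal computation; both are legitimate.

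There is, however, a concrete error in the bounds step: the extremal profile is \emph{not} $|S_i|=\min(p,\,s+t+1-2i)$ once clipping occurs. Take $p=5$, $s=t=4$ (within the theorem's range). Your formula gives the profile $(5,5,3,1)$, which sums to $14\ne st=16$ and violates Pollard's inequality at $j=3$ ($13<3\cdot 5$), so it is not even admissible, and majorization (which presupposes equal totals) does not apply to it. The true lower envelope of the partial sums is $L(j)=j\min(p,s+t-j)$, which is concave with $L(\min(s,t))=st$, so the majorization-minimal profile is $T_i=L(i)-L(i-1)$; this equals $p$ for all $i\le s+t-p$ and only then decreases in steps of $2$ --- for the example it is $(5,5,5,1)$, which is indeed the profile of two intervals. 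The discrepancy is not cosmetic: with $\phi(x)=(x-1)_+$ your profile yields a lower bound of $10$, whereas the correct profile yields $f(4,4)=12$; similarly your profile gives $14$ for the upper bound where $g(4,4)=13$. So as written, the ``trapezoidal value of the functional'' would not reproduce the closed forms \eqref{eqn:f}--\eqref{eqn:g} in the wraparound regimes. The fix is local --- replace the formula by $T_i=L(i)-L(i-1)$ (equivalently, $T_i=p$ for $i\le s+t-p$ and $T_i=s+t+1-2i$ thereafter) and redo the evaluation --- but it must be made, since the clipped regimes are precisely the outer cases of $f$ and $g$ that your bookkeeping paragraph promises to handle.
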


In Section~\ref{sec:bounds} we shall show (for an odd prime $p$) that 
$f(s,t) \le r(A,B,B) \le g(s,t)$ for all subsets $A,B$ of $\Z_p$ of cardinality $s,t$, respectively.
In Section~\ref{sec:constructions} we shall show (for an odd although not necessarily prime~$p$) that for each integer $r \in [f(s,t),\, g(s,t)]$ and for $B=\{0,1,\dots,t-1\}$, there is a subset $A$ of $\Z_p$ of cardinality $s$ for which $r(A,B,B) = r$.
Combining these results proves Theorem~\ref{thm:main}.

It is interesting to note that, while the relaxation from Schur triples to additive triples yields a spectrum of values of $r(A,B,B)$ which no longer has any ``missing values" between the minimum and maximum, the actual values of the minimum and maximum for $r(A,B,B)$ with $|A|=|B|=s$ are precisely the same as the minimum and maximum of $r(A,A,A)$ with $|A|=s$.
Indeed, we see from \eqref{eqn:f} that 
\begin{align*}
f(s,s) 
 &= \begin{cases}
       0 & \mbox{for $s \le \frac{p+1}{3}$}, \\
       \floor{\frac{(3s-p)^2}{4}} & \mbox{for $\frac{p+2}{3} \le s \le p-2$}, \\
       s(2s-p) &  \mbox{for $s = p-1$}
    \end{cases} \\
 &= f_s
\end{align*}
by combining the domain $s=p-1$ with the domain $\frac{p+2}{3} \le s \le p-2$.
We also see from \eqref{eqn:g} that 
\begin{align*}
g(s,s) 
 &= \begin{cases}
       \ceil{\frac{3s^2}{4}} & \mbox{for $s \le \frac{2p-1}{3}$},\\
       s(2s-p)+(p-s)^2 & \mbox{for $\frac{2p}{3} \le s$}
    \end{cases} \\
 &= g_s
\end{align*}
by transferring the cases where $s = \frac{2p}{3}$ or $s = \frac{2p+1}{3}$ is an integer from the domain $\frac{2p}{3} \le s$ to the domain $s \le \frac{2p-1}{3}$.

\section{Preliminary results}
In this section we obtain some preliminary results for additive triples in a group~$G$ (not necessarily~$\Z_p$).
We firstly derive two expressions for $r(A,B,B)$.

\begin{proposition}
\label{prop:twoexp}
Let $G$ be a group and let $A,B$ be subsets of~$G$.
\begin{enumerate}[$(i)$]
\item
We have
\[
r(A,B,B) = \sum_{a \in A} \big |(a+B) \cap B \big|. 
\]

\item
For each $i \ge 1$, let $S_i$ be the set of elements of $G$ expressible in at least $i$ ways in the form $a+b$ for $a \in A$ and $b \in B$. Then
\[
    r(A,B,B) =\sum_{i \ge 1} |S_i \cap B|.
\]
\end{enumerate}
\end{proposition}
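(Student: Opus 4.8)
The plan is to prove each part by a direct counting argument, reorganizing the defining sum for $r(A,B,B)$ in two different ways. In both cases I would begin from the observation that $r(A,B,B)$ counts precisely the pairs $(a,b) \in A \times B$ for which $a+b$ also lies in $B$, since each such pair determines exactly one additive triple $(a,b,a+b)$ and conversely.

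For part $(i)$ I would fix $a \in A$ and count the contribution of that value of $a$, namely the number of $b \in B$ with $a+b \in B$; summing over $a \in A$ then gives $r(A,B,B)$. It remains to identify this inner count with $|(a+B) \cap B|$. For this I would use the translation map $b \mapsto a+b$, which is a bijection of $G$; it carries the set $\{b \in B : a+b \in B\}$ onto $\{c \in B : c - a \in B\} = (a+B) \cap B$, so the two sets have equal cardinality. Summing over $a \in A$ yields the stated formula.

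For part $(ii)$ I would organize the same count by the value of the sum rather than by $a$. For $c \in G$, let $\rho(c)$ denote the number of representations $c = a+b$ with $a \in A$ and $b \in B$; then $r(A,B,B) = \sum_{c \in B} \rho(c)$. The key step is the layer-cake identity $\rho(c) = \sum_{i \ge 1} [\rho(c) \ge i]$, valid for any nonnegative integer $\rho(c)$, together with the definition of $S_i$, which gives $c \in S_i$ if and only if $\rho(c) \ge i$. Substituting and interchanging the two sums produces $r(A,B,B) = \sum_{i \ge 1} \sum_{c \in B} [c \in S_i] = \sum_{i \ge 1} |S_i \cap B|$.

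Neither step presents a serious obstacle; both are routine double-counting manipulations. The only points requiring a little care are verifying that the translation map in part $(i)$ genuinely restricts to a bijection between the two finite sets, and noting in part $(ii)$ that the sum over $i$ is finite (since $S_i = \emptyset$ once $i$ exceeds $\min(|A|,|B|)$), so that the interchange of summation order is justified.
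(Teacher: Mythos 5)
Your proposal is correct and follows essentially the same route as the paper: part $(i)$ by grouping the triples according to $a$ and identifying the inner count with $|(a+B)\cap B|$, and part $(ii)$ by grouping according to the sum $c\in B$ and applying the layer-cake identity (which the paper phrases as each triple with $a+b=c$ contributing $1$ to $|S_i\cap B|$ for exactly $i=1,\dots,|X(c)|$). Your explicit remarks about the translation bijection and the finiteness of the sum over $i$ are fine but add nothing beyond what the paper's argument already implicitly uses.
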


\begin{proof}
\mbox{}
\begin{enumerate}[$(i)$]
\item
By definition,
\begin{align*}
r(A,B,B) 
 &= \big | \{ (a,b,a+b) : a \in A,\, b \in B,\, a+b \in B\} \big | \nonumber \\
 &= \sum_{a \in A} \big | \{ b: b \in B,\, a+b \in B\} \big | \nonumber \\
 &= \sum_{a \in A} \big |(a+B) \cap B \big|.
\end{align*}

\item
Fix $c \in B$ and consider the set $X(c)$ of triples of the form $(a,b,a+b) \in A \times B \times B$ for which $a+b=c$. 
We prove the required equality by showing that the triples of $X(c)$ contribute equally to the left hand side and the right hand side. 
The contribution to the left hand side is~$|X(c)|$. 
The contribution to $|S_i \cap B|$ is 1 for each $i$ satisfying $1 \le i \le |X(c)|$ and is 0 for each $i > |X(c)|$, giving a total contribution to the right hand side of~$|X(c)|$.
\end{enumerate}

\end{proof}

Write $\oA$ for the complement of a subset $A$ in a group~$G$.
We now give a relationship between $r(A,B,B)$ and $r(\oA,\oB,\oB)$.

\begin{theorem}\label{thm:relation}
Let $A,B$ be subsets of a group~$G$. Then
\[
r(A,B,B)+ r(\oA,\oB,\oB)= |A| \cdot |B| - |A| \cdot |\oB| + |\oB|^2.
\]
\end{theorem}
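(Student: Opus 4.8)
The plan is to expand both terms on the left via Proposition~\ref{prop:twoexp}$(i)$ and then compare the summands pointwise. Writing $n = |G|$ (so that the complements, and hence the right-hand side, are finite), Proposition~\ref{prop:twoexp}$(i)$ gives $r(A,B,B) = \sum_{a \in A} |(a+B) \cap B|$ and $r(\oA,\oB,\oB) = \sum_{a \in \oA} |(a+\oB) \cap \oB|$. The first key step is a pointwise identity valid for every fixed $a \in G$: I would compare $|(a+\oB) \cap \oB|$ with $|(a+B) \cap B|$ by applying the bijection $b \mapsto a+b$ of $G$ to the partition $G = B \sqcup \oB$. Among the $|B|$ elements of $B$, exactly $|(a+B) \cap B|$ map into $B$ and the remaining $|B| - |(a+B) \cap B|$ map into $\oB$; since $B$ has exactly $|B|$ preimages in total, this forces $|B| - |(a+B) \cap B|$ elements of $\oB$ to map into $B$, and hence $|(a+\oB) \cap \oB| = |\oB| - \big(|B| - |(a+B) \cap B|\big) = |(a+B) \cap B| + (n - 2|B|)$.

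The second ingredient is the global identity $\sum_{a \in G} |(a+B) \cap B| = |B|^2$, which I would establish by recognising the left-hand side as the number of pairs $(b,b') \in B \times B$ under the substitution $a = b' - b$. Summing the pointwise identity over $a \in \oA$ and then recombining the sum over $\oA$ with the sum over $A$ into a single sum over $G$, I obtain
\[
r(A,B,B) + r(\oA,\oB,\oB) = \sum_{a \in G} |(a+B) \cap B| + |\oA| \, (n - 2|B|) = |B|^2 + |\oA|\,(n - 2|B|).
\]

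It then remains to expand $|B|^2 + |\oA|\,(n - 2|B|)$ using $|\oA| = n - |A|$ and $|\oB| = n - |B|$, and to verify that it equals $|A| \cdot |B| - |A| \cdot |\oB| + |\oB|^2$; this is a routine expansion, and a useful check is that the left-hand side is manifestly invariant under swapping $(A,B)$ with $(\oA,\oB)$, so the closed form must be invariant under that swap too. The only genuine obstacle is the pointwise comparison of the first paragraph: once the bijection $b \mapsto a+b$ pins down $|(a+\oB) \cap \oB|$ in terms of $|(a+B) \cap B|$, everything else is bookkeeping. I would also flag that the argument assumes $G$ is finite, which is all that is needed in the application to $G = \Z_p$.
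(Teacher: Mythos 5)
Your proof is correct, but it takes a genuinely different route from the paper's. The paper proves the identity in three lines by a telescoping decomposition: it writes $r(A,B,B)+r(\oA,\oB,\oB)$ as
$\big(r(A,B,B)+r(A,B,\oB)\big)-\big(r(A,B,\oB)+r(A,\oB,\oB)\big)+\big(r(A,\oB,\oB)+r(\oA,\oB,\oB)\big)$
and observes that each bracketed pair completes one coordinate of the triple $(a,b,a+b)$ over the partition $G=B\sqcup\oB$ (or $G=A\sqcup\oA$), so the three pairs count exactly $|A|\cdot|B|$, $|A|\cdot|\oB|$ and $|\oB|^2$ pairs respectively, with no computation required. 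Your argument instead expands both terms via Proposition~\ref{prop:twoexp}$(i)$ and rests on two auxiliary facts, both of which you justify correctly: the pointwise identity $|(a+\oB)\cap\oB| = |(a+B)\cap B| + |G| - 2|B|$ (from the translation bijection $b\mapsto a+b$ applied to the partition $B\sqcup\oB$) and the global count $\sum_{a\in G}|(a+B)\cap B| = |B|^2$; the final algebraic check succeeds, as both closed forms equal $|B|^2+|\oA|\,\big(|G|-2|B|\big)$. Your flag that finiteness of $G$ is needed is fair, though it is already forced by the statement itself since the right-hand side involves $|\oB|$. What the paper's route buys is brevity and the avoidance of any enumeration formula; what yours buys is an explicit and potentially reusable local identity relating the translation-intersection profile of $B$ to that of its complement, which is in the same spirit as the multiset analysis the paper later performs in Lemma~\ref{lem:multiset}.
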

\begin{proof}
We calculate
\begin{align*}
 & r(A,B,B)+ r(\oA,\oB,\oB) \\
 & \hspace{10mm} = \Big( r(A,B,B) + r(A,B,\oB) \Big)
  - \Big( r(A,B,\oB) + r(A,\oB,\oB) \Big) 
  + \Big( r(A,\oB,\oB) + r(\oA,\oB,\oB) \Big) \\
 & \hspace{10mm} = |A| \cdot |B| - |A| \cdot |\oB| + |\oB|^2
\end{align*}
by definition of $r(A,B,B)$.

\end{proof}

\section{Establishing the lower and upper bounds}
\label{sec:bounds}
In this section we prove Theorem~\ref{thm:fg} below, which establishes a lower and upper bound on the value of $r(A,B,B)$ for all subsets $A$ and~$B$.

\begin{theorem}
\label{thm:fg}
Let $p$ be an odd prime, let $1 \le s,t \le p-1$, and let $A,B$ be subsets of $\Z_p$ of cardinality $s,t$, respectively.
Let $f(s,t)$ and $g(s,t)$ be the functions defined in \eqref{eqn:f} and~\eqref{eqn:g}. Then $f(s,t) \le r(A,B,B) \le g(s,t)$.
\end{theorem}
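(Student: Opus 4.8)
The plan is to prove the lower bound $r(A,B,B) \ge f(s,t)$ directly from Pollard's Theorem~\ref{thm:Pollard}, and then obtain the upper bound $r(A,B,B) \le g(s,t)$ almost for free by applying the lower bound to the complementary sets $\oA,\oB$ and invoking the duality of Theorem~\ref{thm:relation}. For the lower bound I would start from $r(A,B,B) = \sum_{i\ge 1}|S_i \cap B|$ (Proposition~\ref{prop:twoexp}$(ii)$). Since $|S_i \cap B| = |S_i| - |S_i \cap \oB| \ge |S_i| - (p-t)$ and also $|S_i \cap B| \ge 0$, each term obeys $|S_i \cap B| \ge \max\big(0,\,|S_i|-(p-t)\big)$. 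Now fix any integer $j$ with $1 \le j \le \min(s,t)$, drop the nonnegative tail terms with $i>j$, discard the $\max$, and apply Theorem~\ref{thm:Pollard}:
\[
r(A,B,B) \ge \sum_{i=1}^{j}\big(|S_i|-(p-t)\big) = \Big(\sum_{i=1}^{j}|S_i|\Big) - j(p-t) \ge j\min(p,\,s+t-j) - j(p-t).
\]
As $j$ is arbitrary, this yields $r(A,B,B) \ge \max_{1\le j\le\min(s,t)} h(j)$ where $h(j) := j\min(p,\,s+t-j) - j(p-t)$.

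The main computational step is to show that this maximum equals $f(s,t)$. For $j \ge s+t-p$ one has $\min(p,\,s+t-j)=s+t-j$, so $h(j)=j(s+2t-p-j)$ is a downward parabola with vertex $j^\ast=(s+2t-p)/2$; for $j<s+t-p$ one has $h(j)=jt$, which is increasing and never exceeds the parabola's values, so the maximum is attained in the range $j \ge s+t-p$. Since $s+t-p \le j^\ast$ always holds (equivalently $p\ge s$), the optimum is the feasible integer nearest $j^\ast$, and the three branches of \eqref{eqn:f} arise exactly according to where $j^\ast$ falls: if $j^\ast \le \tfrac12$ (that is, $2t \le p-s+1$) then $h(j)\le 0$ for every feasible $j$ and only the trivial bound $r(A,B,B)\ge 0$ survives; if $1 \le j^\ast \le s-1$ (that is, $p-s+2 \le 2t \le p+s-2$), then $j^\ast \le t$ as well, the nearest integer is feasible, and the parabola there evaluates to $\floor{(s+2t-p)^2/4}$; and if $j^\ast \ge s-\tfrac12$ (that is, $p+s-1 \le 2t$, which forces $t\ge s$ so that $j=s$ is feasible), the parabola is increasing up to its vertex and $h(s)=s(2t-p)$ is optimal. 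This reproduces $f(s,t)$, with the floor coming from rounding $j^\ast$ to an integer.

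For the upper bound I would invoke Theorem~\ref{thm:relation} with $|A|=s$, $|B|=t$, $|\oB|=p-t$, which gives
\[
r(A,B,B) = st - s(p-t) + (p-t)^2 - r(\oA,\oB,\oB) \le st - s(p-t) + (p-t)^2 - f(p-s,\,p-t),
\]
the inequality being the lower bound just established applied to $\oA,\oB$ (whose cardinalities $p-s,\,p-t$ lie in $[1,p-1]$). It then remains to verify, using \eqref{eqn:f} and \eqref{eqn:g}, that the right-hand side equals $g(s,t)$ in each of its three cases. Substituting $s\mapsto p-s$, $t\mapsto p-t$ carries the three domains of $f$ onto those of $g$, and the algebra collapses cleanly: the outer case gives $t^2$, the middle case reduces to $st - s^2/4$, and the far case gives $s(2t-p)+(p-t)^2$. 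The floor of $f$ turns into the ceiling of $g$ via the identity $n-\floor{x}=\ceil{n-x}$ valid for integer $n$, since $st-s(p-t)+(p-t)^2$ is an integer.

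The main obstacle is not either inequality in isolation — Pollard's theorem and the complementation identity are each applied in a single step — but the two parallel endgames: certifying $\max_j h(j)=f(s,t)$ and checking that the duality identity reproduces $g(s,t)$. Both demand careful bookkeeping at the domain endpoints (where $2t$ equals $p-s+1$, $p+s-1$, $s$, or $2p-s$) together with the parity-dependent rounding that produces the floor and ceiling, and it is precisely this boundary/integrality analysis that must be carried out in full.
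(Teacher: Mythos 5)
Your proposal is correct and takes essentially the same route as the paper: the lower bound is obtained from Proposition~\ref{prop:twoexp}$(ii)$ together with Theorem~\ref{thm:Pollard}, and the upper bound follows by applying that lower bound to $\oA,\oB$ via Theorem~\ref{thm:relation} and simplifying $f(p-s,p-t)$. The only (cosmetic) difference is that you maximize $h(j)$ over all feasible $j$ and locate the vertex of the parabola, whereas the paper simply exhibits the optimal choice of $j$ in each of the three ranges for $2t$.
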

\begin{proof}
We make the following claim, which will be proved subsequently:
\begin{equation}
\label{eqn:lowerbound}
r(X,Y,Y) \ge f(|X|,|Y|) \quad \mbox{for all subsets $X,Y$ of $\Z_p$}.
\end{equation}
Given this claim, by Theorem~\ref{thm:relation} we have
\begin{align}
r(A,B,B) 
 &= st-s(p-t)+(p-t)^2-r(\oA,\oB,\oB) \nonumber \\
 &\le st-s(p-t)+(p-t)^2-f(p-s,p-t) \label{eqn:upperbound}
\end{align}
using the case $(X,Y) = (\oA,\oB)$ of~\eqref{eqn:lowerbound}. 
By definition of $f$, we have
\begin{align*}
f(p-s,p-t)
 & = \begin{cases}
       (p-s)(p-2t) &  \mbox{for $2t \le s+1$}, \\
       \floor{\frac{(2p-s-2t)^2}{4}} & \mbox{for $s+2 \le 2t \le 2p-s-2$}, \\
       0 & \mbox{for $2p-s-1 \le 2t$},
     \end{cases}  
\end{align*}
and we may adjust the three ranges for $2t$ to give the equivalent form
\[
f(p-s,p-t)
 = \begin{cases}
        (p-s)(p-2t) &  \mbox{for $2t \le s$}, \\
        \floor{\frac{(2p-s-2t)^2}{4}} & \mbox{for $s+1 \le 2t \le 2p-s-1$}, \\
        0 & \mbox{for $2p-s \le 2t$}.
     \end{cases}  
\]
Substitution in \eqref{eqn:upperbound} and straightforward calculation then gives 
\[
r(A,B,B) \le g(s,t),
\]
which combines with the case $(X,Y) = (A,B)$ of \eqref{eqn:lowerbound} to give the required result.

It remains to prove the claim~\eqref{eqn:lowerbound} by showing that
$r(A,B,B) \ge f(s,t)$. 
Our argument is inspired by that used in the proof of~\cite[Theorem 1.3]{SamSud}.
For $i \ge 1$, let $S_i$ be the set of elements of $\Z_p$ expressible in at least $i$ ways in the form $a+b$ for $a \in A$ and $b \in B$.  
By Proposition~\ref{prop:twoexp}$(ii)$, for $j \ge 1$ we have
\begin{eqnarray*}
r(A,B,B) 
    & \ge & \sum_{i=1}^j |S_i \cap B|\\
    & \ge & \sum_{i=1}^j \big(|S_i|- |\oB|\big)
\end{eqnarray*}
using the set inequality $|S_i \cap B|+ | \oB| \ge |S_i|$.
Theorem~\ref{thm:Pollard} then gives 
\begin{equation}
\label{eqn:pollardr}
r(A,B,B) \ge j\, \min(p, \, s+t-j)- j(p-t)  \quad \mbox{for $1 \le j \le \min(s,t)$}.
\end{equation}

\begin{description}
\item[Case 1: $2t \le p-s+1$.] 
In this range, $r(A,B,B) \ge 0$ trivially.

\item[Case 2: $p-s+2 \le 2t \le p+s-2$.] 
In this range, the value $j=\ceil{\frac{s+2t-p}{2}}$ satisfies $1 \le j < \min(s,t)$ and $s+t-j < p$, 
so substitution in \eqref{eqn:pollardr} gives
\begin{align*}
r(A,B,B) 
  &\ge j(s+t-j)- j(p-t)  \\
  &= j(s+2t-p-j)  \\
  &= \floor{\frac{(s+2t-p)^2}{4}}.
\end{align*}

\item[Case 3: $p+s-1 \le 2t$.]
In this range, the value $j=s$ satisfies $1 \le j \le \min(s,t)$ and $s+t-j < p$, 
so substitution in \eqref{eqn:pollardr} gives
\begin{align*}
r(A,B,B) 
  &\ge j(s+t-j)- j(p-t)  \\
  &= s(2t-p).
\end{align*}

\end{description}
Combining results for Cases 1, 2, and 3 proves that $r(A,B,B) \ge f(s,t)$, as required.
\end{proof}

\section{Achieving the spectrum constructively}
\label{sec:constructions}
In this section we constructively prove Theorem~\ref{thm:construction} below, which shows that each integer value~$r$ in the closed interval $[f(s,t),\, g(s,t)]$ is an attainable value of $r(A,B,B)$ for some choice of subsets $A$ and~$B$. The construction takes $p$ to be odd but does not require $p$ to be prime.

\begin{theorem} 
\label{thm:construction}
Let $p$ be an odd integer, let $1 \le s,t \le p-1$, and let $B = \{0,1,\dots,t-1\}$. 
Let $f(s,t)$ and $g(s,t)$ be the functions defined in \eqref{eqn:f} and \eqref{eqn:g}, and let $r \in [f(s,t),\, g(s,t)]$. Then there is a subset $A$ of $\Z_p$ of cardinality $s$ for which $r(A,B,B) = r$.
\end{theorem}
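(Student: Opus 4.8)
The plan is to exploit that, since $B=\{0,1,\dots,t-1\}$ is an interval, Proposition~\ref{prop:twoexp}$(i)$ collapses the problem to a one-dimensional counting function. Writing $w(a):=|(a+B)\cap B|$ for $a\in\Z_p$, we have $r(A,B,B)=\sum_{a\in A}w(a)$, so the task becomes: realize every integer of $[f(s,t),\,g(s,t)]$ as a sum of $s$ of the values $w(0),w(1),\dots,w(p-1)$. First I would compute $w$ explicitly. Viewing $a$ as an integer in $\{0,1,\dots,p-1\}$, the overlap of the two length-$t$ arcs $B$ and $a+B$ on the cycle $\Z_p$ is $w(a)=\max(0,\,t-a)+\max(0,\,t-(p-a))$, a symmetric ``tent'' profile with $w(a)=w(p-a)$. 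Its image is precisely the block of \emph{consecutive} integers from $v_{\min}:=\max(0,\,2t-p)$ up to $t$: the peak value $t$ occurs once (at $a=0$), each intermediate value occurs twice, and $v_{\min}$ occurs with whatever multiplicity is needed to total $p$ values. I would also record the identity $\sum_{a\in\Z_p}w(a)=|B|^2=t^2$, which holds because each ordered pair $(b,b')\in B\times B$ contributes to $w(b'-b)$ exactly once.

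The crux is a contiguity lemma: if $w\colon\Z_p\to\Z_{\ge 0}$ has image equal to a set of consecutive integers, then $\{\sum_{a\in A}w(a):|A|=s\}$ is the full integer interval $[m(s),M(s)]$, where $m(s)$ and $M(s)$ are the sums of the $s$ smallest and the $s$ largest values of $w$. I would prove this by a single-swap (exchange) argument. Encode an $s$-subset by the numbers $c_v$ of chosen elements of each value $v$, so that $0\le c_v\le n_v$, $\sum_v c_v=s$, and the sum is $\sum_v v\,c_v$. Decreasing the sum by exactly $1$ means transferring one chosen element from a value $v>v_{\min}$ down to value $v-1$, which is possible whenever an unused element of value $v-1$ exists; consecutiveness of the image guarantees $v-1$ is itself an attained value. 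A short argument shows that all such moves are blocked only when the configuration already consists of the $s$ smallest values, i.e.\ when the sum equals $m(s)$: if the smallest not-fully-chosen value is $v_0$, the blocked condition forces $c_v=0$ for every $v>v_0$, which is exactly the minimizing configuration. Hence from any subset with sum exceeding $m(s)$ we may step down by $1$, and by induction every integer of $[m(s),M(s)]$ is attained.

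It then remains to identify the endpoints, namely $m(s)=f(s,t)$ and $M(s)=g(s,t)$. Here the identity $\sum_{a\in\Z_p}w(a)=t^2$ gives the clean reduction $M(s)=t^2-m(p-s)$, since maximizing $\sum_{a\in A}w(a)$ over $s$-subsets is the same as minimizing the sum over the complementary $(p-s)$-subset; so I need only compute the minimizing sum $m(\cdot)$. I would evaluate $m(s)$ by summing the $s$ smallest terms of the sorted tent profile. The three cases of \eqref{eqn:f} correspond exactly to where those $s$ values fall: entirely within the floor at height $v_{\min}$ (yielding the linear value, and $0$ when $v_{\min}=0$), or spilling up the rising edge (yielding the triangular value $\floor{(s+2t-p)^2/4}$). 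Feeding the resulting expressions through $M(s)=t^2-m(p-s)$ then reproduces the three cases of $g(s,t)$ in \eqref{eqn:g}, with the ceiling arising from an odd/even parity split of $s$ on the strictly decreasing edge; the complement relation of Theorem~\ref{thm:relation}, using that $\oB$ is again an interval of length $p-t$, provides an independent cross-check of the large-$t$ regimes.

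I expect the contiguity lemma to be the conceptual heart, but I anticipate the main labour — and the most error-prone part — to be the case-by-case verification that $m(s)$ and $M(s)$ match the piecewise formulas, because the wraparound regimes ($2t\le p$ versus $2t>p$) and the floor/ceiling parity adjustments must be tracked simultaneously across all the boundary values of $2t$.
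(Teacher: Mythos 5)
Your proposal is correct and follows essentially the same route as the paper: reduce via Proposition~\ref{prop:twoexp}$(i)$ to choosing $s$ elements from the multiset of overlap sizes $|(a+B)\cap B|$ (the same ``tent'' profile the paper encodes in Figure~\ref{fig:M}), obtain the full interval by the same one-step exchange argument (the paper's ``moving a ball one urn to the right''), and then verify the endpoints against \eqref{eqn:f} and \eqref{eqn:g}. The only difference is cosmetic: you derive the maximum endpoint from $M(s)=t^2-m(p-s)$ using $\sum_a w(a)=t^2$, whereas the paper computes the sum of the $s$ largest elements directly via its Lemma~\ref{lem:sum-identities}; both give the same case analysis.
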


We shall use a visual representation of a multiset involving balls and urns. For example, Figure~\ref{fig:M}(a) represents the multiset comprising $p-2t+1$ elements~$0$, two elements each of $1,2,\dots,t-1$, and one element~$t$.
We firstly use Proposition~\ref{prop:twoexp}$(i)$ to transform the condition $r(A,B,B) = r$ into an equivalent statement involving the multiset in Figure~\ref{fig:M}.

\begin{figure}[ht]
  \centering
  \begin{subfigure}[b]{0.8\linewidth}
    \includegraphics[width=\linewidth]{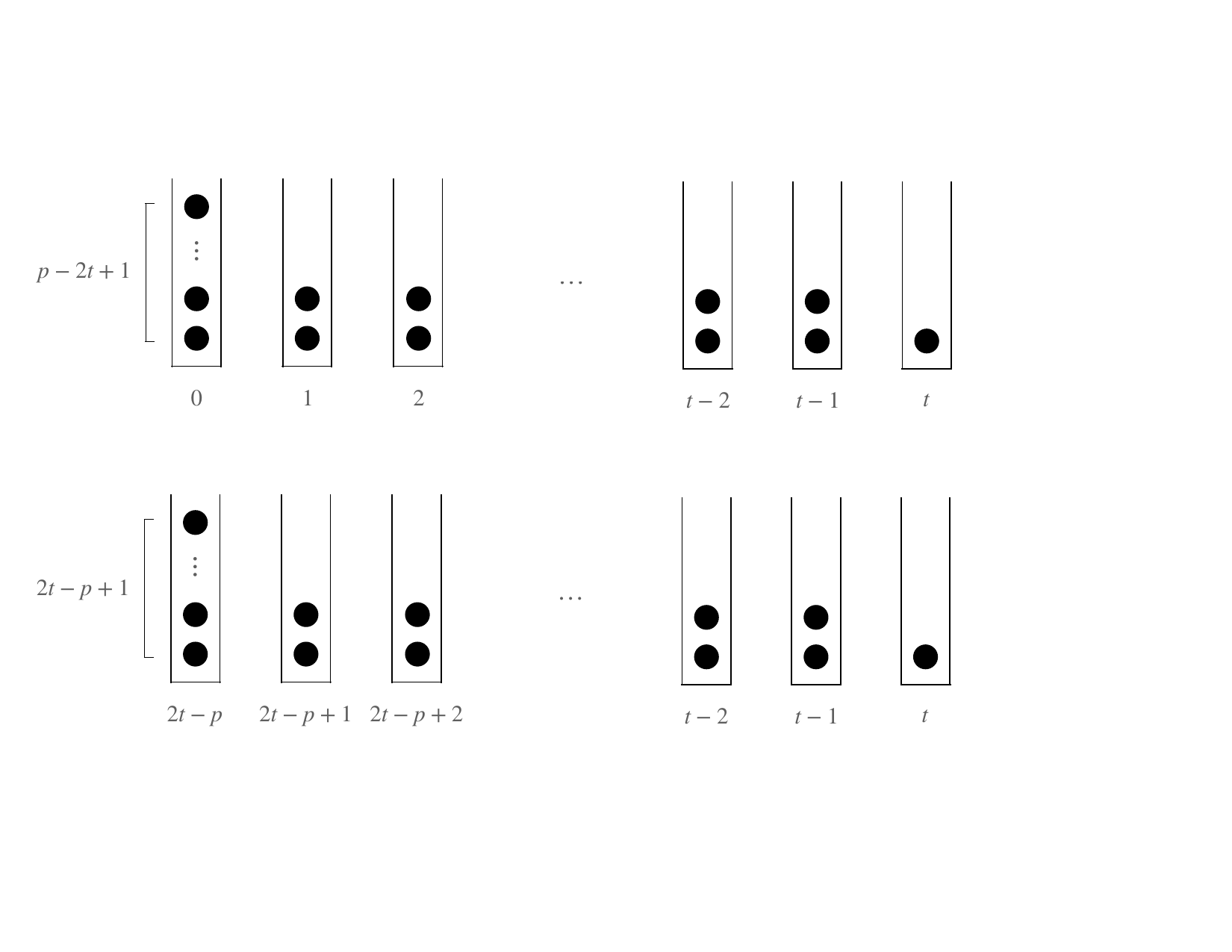}
    \caption{The case $2t \le p-1$}
    \vspace{1.5em}
  \end{subfigure}
 
  \begin{subfigure}[b]{0.8\linewidth}
    \includegraphics[width=\linewidth]{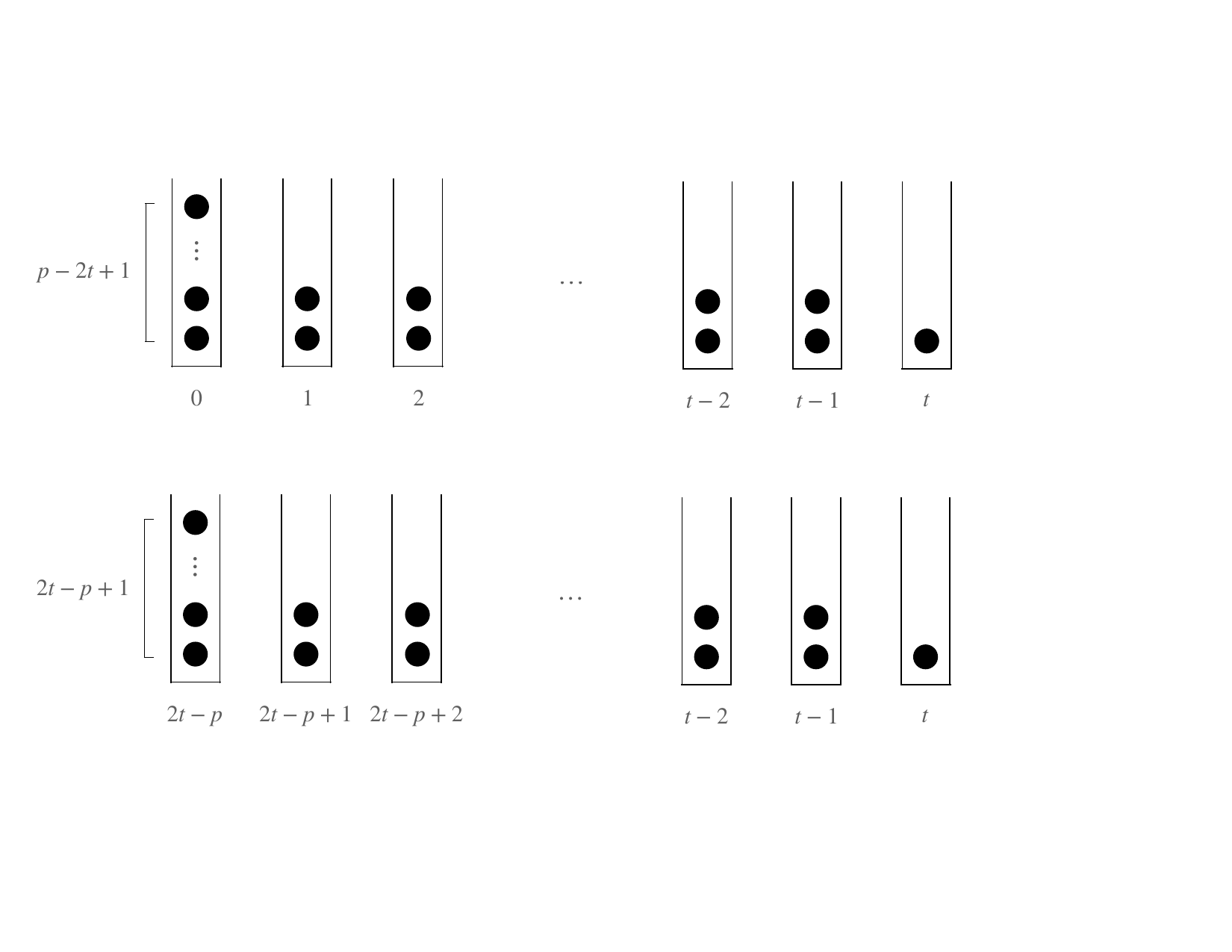}
    \caption{The case $2t \ge p+1$}
  \end{subfigure}
  \caption{The multiset $M$, according to whether $2t \le p-1$ or $2t \ge p+1$.}
  \label{fig:M}
\end{figure}

\begin{lemma}
\label{lem:multiset}
Let $p$ be an odd integer, let $s,t$ be integers satisfying $1 \le s,t \le p-1$, and let
$B = \{0,1,\dots,t-1\}$. 
Then there is a subset $A$ of $\Z_p$ of cardinality $s$ for which $r(A,B,B) = r$ if and only if the multiset $M$ represented in Figure~\ref{fig:M} contains a multi-subset of cardinality $s$ whose elements sum to~$r$.
\end{lemma}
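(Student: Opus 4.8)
The plan is to start from the first expression for $r(A,B,B)$ in Proposition~\ref{prop:twoexp}$(i)$, which gives
\[
r(A,B,B) = \sum_{a \in A} \big|(a+B)\cap B\big|.
\]
The crucial observation is that, because $B=\{0,1,\dots,t-1\}$ is an interval, the overlap quantity $\phi(a) := |(a+B)\cap B|$ depends only on the shift~$a$, and its value as $a$ ranges over all of $\Z_p$ can be computed explicitly. Writing each $a \in \Z_p$ as a representative in $\{0,1,\dots,p-1\}$, the set $a+B$ is the interval $\{a,a+1,\dots,a+t-1\}$ read modulo~$p$, and intersecting it with $B$ amounts to counting how two arcs of length~$t$ on the cycle $\Z_p$ overlap as one is rotated past the other. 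I would carry out this count directly, keeping track of the wraparound contribution once $a+t-1$ exceeds $p-1$, and split into the two regimes $2t \le p-1$ and $2t \ge p+1$ (the case $2t=p$ cannot occur since $p$ is odd).

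The outcome of this computation is that the multiset of all $p$ overlap values $\{\!\{\phi(a) : a \in \Z_p\}\!\}$ is exactly the multiset $M$ of Figure~\ref{fig:M}. Concretely, $\phi(0)=t$ (one copy of the top value~$t$), and $\phi$ decreases symmetrically as $a$ moves away from~$0$ in either direction, so that each intermediate value is attained exactly twice; when $2t \le p-1$ the overlap drops to~$0$ and stays there on a block of $p-2t+1$ shifts, while when $2t \ge p+1$ the two arcs never separate and the overlap bottoms out at $2t-p$, attained on a block of $2t-p+1$ shifts. In each regime a direct count confirms that the total number of elements is~$p$, matching the two panels of Figure~\ref{fig:M}. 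This identification of the multiset of overlap values with $M$ is the main step, and the only real obstacle is bookkeeping: being careful with the wraparound and verifying the multiplicities and the single versus double counting of the extreme values in both regimes.

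With this in hand the equivalence is immediate. I would view $M$ as a collection of $p$ balls, one for each $a \in \Z_p$, with the ball for $a$ placed in the urn labelled~$\phi(a)$. A subset $A \subseteq \Z_p$ of cardinality~$s$ is then precisely a choice of $s$ of these balls, and by the displayed formula $r(A,B,B)=\sum_{a\in A}\phi(a)$ is exactly the sum of the values of the chosen balls. Hence the values $r(A,B,B)$ attainable by cardinality-$s$ subsets~$A$ are exactly the sums of cardinality-$s$ multi-subsets of~$M$; in particular there is such an $A$ with $r(A,B,B)=r$ if and only if some cardinality-$s$ multi-subset of~$M$ sums to~$r$, which is the assertion of the lemma.
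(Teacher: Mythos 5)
Your proposal is correct and follows essentially the same route as the paper: apply Proposition~\ref{prop:twoexp}$(i)$, compute the overlap $|(a+B)\cap B|$ explicitly for each shift $a$ (the paper uses representatives $\{0,\pm 1,\dots,\pm\frac{p-1}{2}\}$ and the symmetry $|(-a+B)\cap B|=|(a+B)\cap B|$, which is only a cosmetic difference from your $\{0,\dots,p-1\}$ bookkeeping), identify the resulting multiset of $p$ overlap values with $M$, and read off the equivalence. Your stated values and multiplicities in both regimes match the paper's formula $\max(0,t-a)$ versus $\max(t-a,2t-p)$, so the argument is sound.
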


\begin{proof}
Regard $\Z_p$ as having representatives
$\big\{0,\pm 1,\pm 2,\dots,\pm(\frac{p-1}{2})\big\}$, and let $A$ be a subset of~$\Z_p$.
We make the following claim, which will be proved subsequently: for $a \in \{0,1,\dots,\frac{p-1}{2}\}$,
\begin{equation}
\label{eqn:a-a}
|(a+B) \cap B| = |(-a+B) \cap B| 
  = \begin{cases}
  \max(0, t-a)      & \mbox{for $2t \le p-1$}, \\  
  \max(t-a, 2t-p)   & \mbox{for $2t \ge p+1$}
  \end{cases}.
\end{equation}
Given this claim, as $a$ ranges over $\Z_p =  
\big\{0,\pm 1,\pm 2,\dots,\pm(\frac{p-1}{2})\big\}$,
the size of the intersection $|(a+B) \cap B|$ takes each value in the multiset~$M$ (having cardinality~$p$) exactly once.
It then follows from Proposition~\ref{prop:twoexp}$(i)$ that there is a subset $A$ of $\Z_p$ of cardinality $s$ for which $r(A,B,B) = r$ if and only if $M$ contains a multi-subset of cardinality $s$ whose elements sum to~$r$.

It remains to prove the claim. 
Let $a \in \{0,1,\dots,\frac{p-1}{2}\}$.
It is sufficient to prove that $|(a+B) \cap B|$ takes the form stated in \eqref{eqn:a-a}, because $|(-a+B) \cap B| = |(a+(-a+B)) \cap (a+B)| = |B \cap (a+B)|$.

\begin{description}
\item[Case 1: $2t \le p-1$.] 
Since $a+t-1 \le \frac{p-1}{2}+ \frac{p-1}{2} -1 < p$, we have $a+B = \{a,a+1,\dots,a+t-1\}$ (in which reduction modulo $p$ is not necessary) and so
\[
|(a+B) \cap B| = | \{a,a+1,\dots,t-1\} | = \max(0,t-a), 
\]
as required.

\item[Case 2: $2t \ge p+1$.] 
We have
\[
a+B = \begin{cases}
  \{a,a+1,\dots,a+t-1\} & \mbox{for $a+t-1 \le p-1$,} \\
  \{a, a+1, \dots, p-1\} \cup \{0, 1, \dots, a+t-1-p\} & \mbox{for $a+t-1 \ge p$},
\end{cases}
\] and so
\begin{align*}
|(a+B) \cap B| 
 &= \begin{cases}
 t-a  & \mbox{for $a+t-1 \le p-1$,} \\
 (t-a) + (a+t-p)  & \mbox{for $a+t-1 \ge p$} 
 \end{cases} \\
 &= \max(t-a, 2t-p),
\end{align*}
as required.
\end{description}
Combining results for Cases 1 and 2 proves the claim.
\end{proof}

The following counting result is straightforward to verify.

\begin{lemma}
\label{lem:sum-identities}
Let $n, u$ be integers, where $1 \le n \le 2u-1$. Let $S$ be the multiset
\[
\{1,1,2,2,\dots,u-1,u-1\} \cup \{u\}.
\]
Then the sum of the $n$ smallest elements of $S$ is $\floor{\frac{(n+1)^2}{4}}$
and the sum of the $n$ largest elements of $S$ is
$\ceil{\frac{n(4u-n)}{4}}$.
\end{lemma}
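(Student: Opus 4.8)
The plan is to exhibit both sums as explicit functions of~$n$ by listing the elements of~$S$ in increasing order and splitting according to the parity of~$n$, then to reconcile these explicit values with the stated floor and ceiling expressions. First I would record that $S$, listed in increasing order, is $1,1,2,2,\dots,u-1,u-1,u$; it has cardinality $2u-1$, and its total sum is $2(1+2+\cdots+(u-1))+u = u(u-1)+u = u^2$.

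For the sum of the $n$ smallest elements I would argue directly. Since the $n$ smallest elements form an initial segment of the list above, writing $n=2k$ gives the segment $1,1,\dots,k,k$ whose sum is $2\cdot\frac{k(k+1)}{2}=k(k+1)$, while writing $n=2k+1$ appends a single further $k+1$, giving sum $(k+1)^2$. It then remains only to check that $\floor{\frac{(n+1)^2}{4}}$ equals $k(k+1)$ when $n=2k$ and equals $(k+1)^2$ when $n=2k+1$; this is a routine evaluation of the floor function in each parity class.

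For the sum of the $n$ largest elements I would avoid repeating the parity computation by exploiting complementarity. The $n$ largest elements of~$S$ are precisely those not among the $2u-1-n$ smallest, so their sum equals $u^2$ minus the sum of the $2u-1-n$ smallest. Applying the first identity with $m=2u-1-n$ (so that $m+1=2u-n$), the sum of the $n$ largest equals $u^2 - \floor{\frac{(2u-n)^2}{4}}$. Because $(2u-n)^2 = 4u^2-4un+n^2$ with $4u^2-4un$ divisible by~$4$, this reduces to $un - \floor{\frac{n^2}{4}}$, which equals $\ceil{\frac{4un-n^2}{4}} = \ceil{\frac{n(4u-n)}{4}}$ via the identity $\ceil{-x}=-\floor{x}$ together with the fact that $un$ is an integer.

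The only obstacle is the bookkeeping in matching the parity-indexed integer values to the floor and ceiling expressions, which is purely mechanical. The one genuine simplification is the complementarity observation, which lets the second identity follow from the first together with the value $u^2$ of the total sum, rather than requiring an independent case analysis.
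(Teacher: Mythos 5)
Your proof is correct. The paper itself offers no argument here---it simply introduces the lemma with ``The following counting result is straightforward to verify''---so there is no proof to compare against; your write-up supplies exactly the verification the authors left to the reader. The parity split for the $n$ smallest elements is the obvious direct computation, and your complementarity observation (sum of the $n$ largest equals the total $u^2$ minus the sum of the $2u-1-n$ smallest) is a genuine economy: it derives the ceiling identity from the floor identity via $(2u-n)^2 = 4u^2-4un+n^2$ and $\ceil{-x}=-\floor{x}$, avoiding a second case analysis. One trivial point worth a half-sentence in a final write-up: when $n=2u-1$ the complement has $m=0$ elements, which falls outside the stated hypothesis $1\le m$ of the first identity, but the convention that an empty sum is $0$ agrees with $\floor{\frac{(0+1)^2}{4}}=0$, so the argument goes through unchanged.
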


We now have the necessary ingredients to prove Theorem~\ref{thm:construction}.

\begin{proof}[Proof of Theorem~\ref{thm:construction}]
We consider the odd integer $p$ and the integers $s,t$ satisfying $1 \le s, t \le p-1$ to be fixed.
Let $M$ be the multiset represented in Figure~\ref{fig:M}, in which we distinguish the cases $2t \le p-1$ and $2t \ge p+1$.
We make the following claim, which will be proved subsequently: the sum $r_1$ of the $s$ smallest elements of $M$ and the sum $r_2$ of the $s$ largest elements of $M$ are given in the following table.

\[ 
    \begin{array}{c|c|c}
    & 2t \le p-1 & 2t \ge p+1 \\[1ex] \hline
    
r_1 &   \begin{cases}
        0  & \mbox{for $s \le p-2t+1$}, \\
        \floor{\frac{(s+2t-p)^2}{4}} & \mbox{for $p-2t+2 \le s$}
        \end{cases}
    
    &   \begin{cases}
        s(2t-p)  & \mbox{for $s \le 2t-p+1$}, \\
        \floor{\frac{(s+2t-p)^2}{4}} & \mbox{for $2t-p+2 \le s$}
        \end{cases} \\[3ex] \hline
    &   &           \\[-2ex]
r_2 &   \begin{cases}
        \ceil{\frac{s(4t-s)}{4}} & \mbox{for $s \le 2t-1$}, \\
        t^2 & \mbox{for $2t \le s$}
        \end{cases}

    &   \begin{cases}
        \ceil{\frac{s(4t-s)}{4}} & \mbox{for $s \le 2p-2t-1$}, \\
        s(2t-p)+(p-t)^2 & \mbox{for $2p-2t \le s$}
        \end{cases}
    \end{array} 
\]
Given this claim, it then follows that for each integer $r \in [r_1, r_2]$
there is a multi-subset of $M$ of cardinality $s$ whose elements sum to $r$:
transform the multi-subset whose elements sum to $r_1$ into the multi-subset whose elements sum to $r_2$ by repeatedly moving some ball one urn to the right until the correct number of balls is contained in urn $t$, then in urn $t-1$, and so on.
By Lemma~\ref{lem:multiset}, for each integer $r \in [r_1,\,r_2]$ and for $B = \{0,1,\dots,t-1\}$ there is therefore a subset $A$ of $\Z_p$ of cardinality $s$ for which $r(A,B,B) = r$.
The ranges for $s,t$ in the above table can be rewritten to emphasize the value of~$2t$ rather than~$s$, and the intervals $[r_1, r_2]$ for the cases $2t \le p-1$ and $2t \ge p+1$ then combined to give the interval $[f(s,t),\, g(s,t)]$ described in Theorem~\ref{thm:construction}.

It remains to prove the claim.
\begin{description}
    \item[Case 1:] $2t \le p-1$. See Figure~\ref{fig:M}(a).

    \begin{description}
        \item[The sum $r_1$.]
        If $s \le p-2t+1$ then the $s$ smallest elements of $M$ are each $0$, so $r_1 = 0$.
        
        Otherwise the sum of the $s$ smallest elements of $M$ is the sum of the first $s-(p-2t+1)$ elements of the multiset $\{1,1,2,2,\dots\,t-1,t-1\} \cup \{t\}$, which by Lemma~\ref{lem:sum-identities} (with $u=t$ and $n = s-(p-2t+1)$) equals $\floor{\frac{(s+2t-p)^2}{4}}$.

        \item[The sum $r_2$.]
        If $s \le 2t-1$ then the sum of the $s$ largest elements of $M$ is the sum of the $s$ largest elements of the multiset $\{1,1,2,2,\dots\,t-1,t-1\} \cup \{t\}$, which by Lemma~\ref{lem:sum-identities} (with $u=t$ and $n = s$) equals
        $\ceil{\frac{s(4t-s)}{4}}$.

        Otherwise the sum of the $s$ largest elements of $M$ is the sum of all elements of the multiset $\{1,1,2,2,\dots,t-1,t-1\} \cup \{t\}$, which equals~$t^2$.
    \end{description}

    \item[Case 2:] $2t \ge p+1$. See Figure~\ref{fig:M}(b).

    \begin{description}
        \item[The sum $r_1$.]

        If $s \le 2t-p+1$ then the $s$ smallest elements of $M$ are each $2t-p$, so $r_1 = s(2t-p)$.
        
        Otherwise the sum of the $s$ smallest elements of $M$ is $s(2t-p)$ plus the sum of the first $s-(2t-p+1)$ elements of the multiset $\{1,1,2,2,\dots,p-t-1,p-t-1\} \cup \{p-t\}$, which by Lemma~\ref{lem:sum-identities} (with $u = p-t$ and $n = s-(2t-p+1)$) equals 
        $s(2t-p) + \floor{\frac{(s-2t+p)^2}{4}} = \floor{\frac{(s+2t-p)^2}{4}}$.

        \item[The sum $r_2$.]
        If $s \le 2p-2t-1$ then the sum of the $s$ largest elements of $M$ is the sum of the $s$ largest elements of the multiset $\{1,1,2,2,\dots,t-1,t-1\} \cup \{t\}$, which by Lemma~\ref{lem:sum-identities} (with $u = t$ and $n = s$) equals $\ceil{\frac{s(4t-s)}{4}}$.

        Otherwise the sum of the $s$ largest elements of $M$ is 
        $s(2t-p)$ plus the sum of all elements of the multiset $\{1,1,2,2,\dots,p-t-1,p-t-1\} \cup \{p-t\}$, which equals $s(2t-p) + (p-t)^2$.
   \end{description}

\end{description}
Combining results for Cases 1 and 2 proves the claim.
\end{proof}

\section{Open questions}
Theorem~\ref{thm:main} gives complete information about the spectrum of $r(A,B,B)$ for subsets $A,B$ of $\Z_p$ of cardinality $s,t$, respectively, for an odd prime~$p$. 

What happens when $p$ is not prime? For example, for $p=9$ the interval $[f(7,6),\,g(7,6)]$ specified by \eqref{eqn:f} and \eqref{eqn:g} is $[25,30]$, but the actual set of attainable values of $r(A,B,B)$ is the larger set $\{24\} \cup [25,30]$. In this example, the value $r(A,B,B) = 24$ is achieved by $A=\{0, 1, 2, 4, 5, 7, 8\}$ and $B=\{0, 1, 3, 4, 6, 7\}$;
the two-way implication of Lemma~\ref{lem:multiset} tells us that this value cannot be achieved by taking $B$ to be the interval $\{0,1,2,3,4,5\}$.

More generally, what can be said about $r(A,B,B)$ when $G$ is not a cyclic group?

\bibliographystyle{abbrv}

\end{document}